\documentclass[12pt]{amsart}
\usepackage[margin=1in]{geometry}
\usepackage{amsmath,amssymb,amsthm,mathtools}
\usepackage{tikz}
\usepackage{placeins}
\usetikzlibrary{patterns}
\usepackage[hidelinks]{hyperref}
\numberwithin{equation}{section}
\newtheorem{theorem}{Theorem}[section]
\newtheorem{lemma}[theorem]{Lemma}
\newtheorem{proposition}[theorem]{Proposition}
\theoremstyle{definition}
\newtheorem{remark}[theorem]{Remark}
\theoremstyle{plain}
\newcommand{\Z}{\mathbb Z}
\newcommand{\R}{\mathbb R}
\newcommand{\C}{\mathbb C}
\newcommand{\T}{\mathbb T}
\newcommand{\spec}{\operatorname{spec}}
\newcommand{\Dom}{\operatorname{Dom}}
\newcommand{\Rea}{\operatorname{Re}}
\newcommand{\Ima}{\operatorname{Im}}
\title[Irreducibility of the Bloch variety]{Irreducibility of the Bloch variety for periodic Schr\"odinger operators in arbitrary dimension}
\author[W.\ Liu]{Wencai Liu}
\address{[W.\ Liu] Department of Mathematics, Texas A\&M University}
\email{\href{mailto:wencail@tamu.edu}{wencail@tamu.edu}}
\keywords{Periodic Schr\"odinger operators, Bloch varieties,  analytic sets,  irreducibility, spectral band functions}
\thanks{{\it 2020 Mathematics Subject Classification.} Primary 35J10; Secondary
	35P05, 32C25, 47A75, 81Q10.}
\begin{document}
\maketitle

\begin{abstract}
Let $d\ge1$ and $V\in C(\T^d;\C)$. We prove that the Bloch
variety of $-\Delta+V$ is irreducible modulo periodicity.
The proof combines holomorphic continuation of spectral band
functions with integer translations of quasimomentum. Using analytic
geometry and perturbation estimates along imaginary rays, we
show that, after suitable integer translations, all irreducible
components contain the same holomorphic spectral branch.
\end{abstract}

\section{Introduction and main results}

We study the periodic Schr\"odinger operator $-\Delta+V$.
Let $d\ge1$, $\T^d=\R^d/(2\pi\Z)^d$, and $V\in C(\T^d;\C)$.

The Bloch variety $B_V$ consists of all $(k,\lambda)\in\C^d\times\C$
for which the eigenvalue equation
\[
 (-\Delta+V)u=\lambda u
\]
with the boundary condition
\begin{equation}\label{g4}
 u(x+2\pi n)=e^{2\pi i k\cdot n}u(x)
 \quad(n\in\Z^d)
\end{equation}
has a nonzero solution $u\in H^2_{\rm loc}(\R^d)$. 
The parameter $k\in\C^d$ is called the (complex) quasimomentum.
Writing $u(x)=e^{ik\cdot x}v(x)$ with $v$ periodic, we obtain
\[
 H_V(k)=(-i\nabla+k)^2+V,
\]
with $\Dom H_V(k)=H^2(\T^d)$ for all $k\in\C^d$.
Thus
\[
 B_V=\{(k,\lambda)\in\C^d\times\C:\lambda\in\spec H_V(k)\}.
\]
By \eqref{g4}, $B_V$ is invariant under integer translations in $k$:
if $(k,\lambda)\in B_V$, then $(k+m,\lambda)\in B_V$ for all $m\in\Z^d$.
It is a classical result that  $B_V$ is a complex analytic
subset of $\C^{d+1}$ (see, e.g.,  \cite[Theorem 5.14]{Kuchment2016}).

For example, when $V=0$,
\[
 B_0=\bigcup_{n\in\Z^d}\Gamma_n,
\]
where
\[
 \Gamma_n=\{(k,\lambda)\in\C^d\times\C:\lambda=\sum_{j=1}^d(k_j+n_j)^2\}.
\]

A longstanding conjecture states that $B_V$ is irreducible modulo
periodicity (see, e.g., \cite[Conjecture 5.17]{Kuchment2016}).
Here, $B_V$ is \emph{irreducible modulo periodicity} if, for any two
irreducible components $\Gamma_1$ and $\Gamma_2$ of $B_V$, there exists
$m\in\Z^d$ such that
\[
 \Gamma_1=\Gamma_2+(m,0).
\]
The one-dimensional case follows from classical Floquet theory
\cite[Theorem 1.19]{Kuchment2016}. In dimension two, Kn\"orrer
and Trubowitz \cite{KT} proved irreducibility by constructing a
directional compactification of the complex Bloch variety.
We refer to \cite{Kuchment2016} for further history.
In this paper, we prove the irreducibility  result in every dimension.
\begin{theorem}\label{thm:main}
For every $d\ge1$ and every $V\in C(\T^d;\C)$, the Bloch variety $B_V$ is
irreducible modulo periodicity.
\end{theorem}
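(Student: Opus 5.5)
The plan is to show that every irreducible component of $B_V$, after a suitable integer translation, contains one fixed holomorphic spectral branch. The branch is chosen in a region of quasimomentum where $H_V(k)$ is a small perturbation of the free operator $H_0(k)=(-i\nabla+k)^2$ and its eigenvalues are simple.

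\textbf{Step 1: reduction to germs.} Since $B_V$ is the zero set of a regularized determinant (e.g.\ $\det_2(I+V(H_0(k)-\lambda)^{-1})$ times suitable factors, cf.\ \cite[Theorem 5.14]{Kuchment2016}), it is a hypersurface in $\C^{d+1}$. Every irreducible component $\Gamma$ is therefore a pure $d$-dimensional analytic set. If two irreducible components share a nonempty relatively open subset of their regular parts, they coincide. So it suffices to exhibit a single connected $d$-dimensional holomorphic graph $\{\lambda=E(k)\}$, $k\in U$, with $U\subset\C^d$ open, such that every component $\Gamma$ satisfies $\Gamma+(m,0)\supset\{\lambda=E(k)\}$ for some $m\in\Z^d$.

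\textbf{Step 2: the model branch along imaginary rays.} Take $k=k_0+it\omega$ with $\omega\in\R^d$ a generic unit vector and $t\to\infty$. For the free operator, the eigenvalues $(k+n)^2$ with $n\in\Z^d$ are pairwise separated by at least $c\,t$ for generic $k_0$, because $(k+n)^2-(k+n')^2=(n-n')\cdot(2k_0+n+n')+2it\,\omega\cdot(n-n')$ and $\omega$ is irrational. Meanwhile $\|V(H_0(k)-\lambda)^{-1}\|\lesssim \|V\|_\infty/(t\,\mathrm{dist})$ on suitable contours. Riesz projections then show that near each $(k+n)^2$ there is exactly one simple eigenvalue $E_n(k)$ of $H_V(k)$. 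It is holomorphic on a tube $U=\{k_0'+it\omega' : k_0',\omega' \text{ near } k_0,\omega,\ t>T\}$ and satisfies $E_n(k)=(k+n)^2+\hat V(0)+O(1/t)$. By translation invariance, $E_n(k)=E_0(k+n)$. Consequently all the graphs $\{\lambda=E_n(k)\}$ are integer translates of the single graph $G_0=\{\lambda=E_0(k)\}$.

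\textbf{Step 3: every component meets this region.} This is the main obstacle. We must show that for each irreducible component $\Gamma$ there is a point $(k,\lambda)\in\Gamma$ with $k\in U$, $t$ large, and $\lambda$ lying in the perturbative window, i.e.\ not ``at infinity'' in $\lambda$ relative to $k$. Over $U$, Step 2 gives a complete description of the spectrum: $B_V\cap(U\times\C)=\bigcup_n G_n$, since the contours enclose the whole spectrum once $t$ is large. The real task is to show that $\Gamma\cap(U\times\C)\neq\emptyset$.

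First, the projection $\pi:\Gamma\to\C^d_k$ is surjective. At each $k$ the spectrum is discrete, so $\Gamma$ is locally a finite branched cover over $\C^d$ away from a thin set. Since $\Gamma$ has dimension $d$ and the fibres of $B_V$ are discrete, $\pi(\Gamma)$ is open and, by a properness argument on bounded $\lambda$-sets, closed. Hence $\pi(\Gamma)=\C^d$ and $\Gamma$ contains points above $U$. Above $U$ those points lie on some $G_n$, so $\Gamma\supset G_n$ by Step 1 and the uniqueness of irreducible germs. Applying the translation by $-n$ then gives $\Gamma-(n,0)\supset G_0$.

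The delicate points are the following. The openness of $\pi|_\Gamma$ requires that $\Gamma$ not contain a vertical piece, which holds because the fibres of $B_V$ are discrete. The closedness (or properness) requires a priori bounds keeping $\lambda$ in compacts over compacts in $k$, but eigenvalues can escape to infinity. I would replace closedness by a continuation argument: follow a path from a point of $\Gamma$ to $U$, continuing a local branch $\lambda(k)$ along it. Eigenvalues of the holomorphic family $H_V(k)$ with compact resolvent vary continuously, so branches stay finite and $\lambda(k)$ cannot escape to infinity in finite time. The continuation may hit branch points, but these have codimension one, so the path can be perturbed to avoid them.

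\textbf{Step 4: conclusion.} Given two components $\Gamma_1$ and $\Gamma_2$, Step 3 yields $m_1,m_2\in\Z^d$ with $\Gamma_j-(m_j,0)\supset G_0$. Both $\Gamma_j-(m_j,0)$ are irreducible components of $B_V$ that contain the same $d$-dimensional connected piece. By Step 1 they are equal, so $\Gamma_1=\Gamma_2+(m_1-m_2,0)$.
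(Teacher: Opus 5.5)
Your overall architecture matches the paper's: a fixed simple reference branch for large imaginary quasimomentum, surjectivity of the projection of a component, integer translation, and the identity principle for irreducible components. However, Step 2 rests on a false claim, and the key inference of Step 3 depends on it.

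\textbf{The separation claim in Step 2 is false for $d\ge2$.} The free levels are \emph{not} separated by $ct$. At $k=it\omega$, for $p,m\in\Z^d$ with $p\cdot m=0$, one has $(k+p+m)^2-(k+p-m)^2=4it\,\omega\cdot m$. By Dirichlet's theorem, $|\omega\cdot m|$ is arbitrarily small for suitable $m\in\Z^d\setminus\{0\}$; irrationality of $\omega$ only gives $\omega\cdot m\ne0$. Shifting $k_0$ by $O(1/|m|)$ reproduces this inside your tube.

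So the Riesz-projection argument isolates $(k+n)^2$ only when $k+n$ lies in the paper's set $\mathcal U$. This is guaranteed when $|\Rea(k+n)|$ is bounded in terms of $t$, and it fails, e.g., for the infinitely many $n=p\pm m$ above. Consequently:
\begin{itemize}
\item $E_n(k)=E_0(k+n)$ is not defined for general $n$;
\item the description $B_V\cap(U\times\C)=\bigcup_nG_n$ is unfounded;
\item ``above $U$ those points lie on some $G_n$'' does not follow.
\end{itemize}
A component may meet $U\times\C$ only at eigenvalues near $(k+n)^2$ with $|n|\gg t$, where the levels cluster and no simple branch exists.

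Handling this is the heart of the paper, and it goes as follows.
\begin{enumerate}
\item Restrict to a purely imaginary ray $k=isb$ with $b\cdot r\ne0$ for all $r\ne0$.
\item Prove that every eigenvalue of $H_V(isb)$ lies within $\tfrac14$ of some $(n+isb)^2$ for large $s$. This uses mean-zero $V$, a trigonometric-polynomial approximation, and a second-order Neumann step. The first-order radius $\|V\|_\infty$ would not separate the strips.
\item It follows that $\Rea(\lambda+s^2|b|^2)$ lies within $\tfrac14$ of the integer $|n|^2$.
\item A point of $Y=\bigcup_mT_m(\Gamma)$ followed continuously in $s$ therefore keeps $N=|n_s|^2$ fixed, so $|n_s|$ stays bounded as $s\to\infty$.
\item Once $s\gg\sqrt N$, all $n$ with $|n|^2=N$ satisfy $n+isb\in\mathcal U$. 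Hence $n_s=n_*$ is constant and $\lambda_s=\Lambda(n_*+isb)$.
\end{enumerate}
Only at that point do translation by $n_*$ and continuation inside $\mathcal U$ reach the reference branch.

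\textbf{The surjectivity argument is a second gap.} Openness of $\pi|_\Gamma$ is fine. But ``eigenvalues vary continuously, so $\lambda(k)$ cannot escape to infinity in finite time'' does not follow.
\begin{itemize}
\item Continuity of the spectrum only controls eigenvalues in compact sets of $\lambda$. It does not stop a continued branch from leaving every compact set as the path approaches its endpoint.
\item Since the levels cluster for $d\ge2$, no a priori bound is available. For general analytic sets with discrete fibres the projection need not be surjective, e.g.\ $\{k\lambda=1\}$.
\item Avoiding branch points is irrelevant to this issue.
\end{itemize}

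The paper proves $\mathcal O_Y=\C^d$ via Ueda's lemma, with the hypotheses checked by Schatten-class bounds. The function $T(k)=\sum_{\lambda\in E_Y(k)}m_Y(k,\lambda)(\lambda+\tau)^{-p}$, with $p>d/2+1$, is holomorphic. Moreover $\Rea T\ge0$, with $\Rea T>0$ exactly where $E_Y(k)\ne\varnothing$. The minimum principle for the harmonic function $\Rea T$ then gives $\Rea T>0$ everywhere. Along the ray, escape is excluded separately by the bound $|\lambda_s|\le N+s^2|b|^2+\tfrac14$ that the strip invariant provides.
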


For discrete periodic Schr\"odinger operators, the problem becomes
algebraic after the Floquet transform. Early irreducibility results were
obtained by B\"attig \cite{Battig1988,Battig1992} and Gieseker,
Kn\"orrer, and Trubowitz \cite{GKT} in dimensions two and three, by
compactification methods. In \cite{Liu}, the author developed a method
to prove irreducibility of a family of Laurent polynomials, and used it
to prove irreducibility results for Bloch and Fermi varieties in
arbitrary dimensions. Further results for periodic graphs were obtained in
\cite{FaustLiu,FaustLopezGarcia,FLM,FLM2}.
\subsection{Idea of the proof}
We write $z^2=\sum_{j=1}^d z_j^2$ for $z\in\C^d$ and assume that
$V$ has zero average. In the Fourier basis (see Section~\ref{sec:basics}),
\begin{equation*}
 H_0(k)=\operatorname{diag}_n d_n(k),
\end{equation*}
where $d_n(k)=(n+k)^2$, and
\begin{equation*}
 H_V(k)=\operatorname{diag}_n d_n(k)+V.
\end{equation*}

For a suitably chosen fixed $b\in\R^d$, bounded $a\in\R^d$,
and sufficiently large $s$, $d_0(a+isb)$ is well separated from
the other   diagonal entries $d_n(a+isb)$, $n\ne0$.
Analytic perturbation theory (see Lemma \ref{lem:simple}) then gives a simple holomorphic eigenvalue
branch $\Lambda(k)$ close to $d_0(k)=k^2$
(see Figure~\ref{fig:intro-ray}).

Fix $p_0=(is_0b,\Lambda(is_0b))$ for large $s_0$.
By simplicity, this point belongs to a unique irreducible component.
Our goal is to show that every irreducible component $\Gamma$ of $B_V$
contains $p_0$ after an integer translation. Let
\[
 Y=\bigcup_{m\in\Z^d}\bigl(\Gamma+(m,0)\bigr).
\]
By Theorem~\ref{thm:projection}, proved using Ueda's lemma, there exists
$\lambda_{s_0}$ such that $(is_0b,\lambda_{s_0})\in Y$.

We need not have $\lambda_{s_0}=\Lambda(is_0b)$:
$\lambda_{s_0}$ may be close to $d_{n_{s_0}}(is_0b)=(n_{s_0}+is_0b)^2$
for some $n_{s_0}\ne0$.
To connect $(is_0b,\lambda_{s_0})\in Y$ to $p_0$, we combine
continuation of spectral branches with integer shifts of quasimomentum.
This shift argument, inspired by the author's proof of geometric
Borg's theorem \cite[Remark 3]{LiuBorg}, plays a crucial role in our proof.

The difficulty is that $\lambda_{s_0}$ and $\Lambda(is_0b)$ may be
close to different bands, and crossings of spectral bands
may obstruct holomorphic continuation (see the red part of Figure \ref{fig:intro-ray}). Instead, we  first extend
  the eigenvalue continuously along the ray $k=isb$,
writing $\lambda_s$ for  the extension, with $(isb,\lambda_s)\in Y$ (Lemma~\ref{lem:bounded-indices}). See the red+blue parts of Figure \ref{fig:intro-ray}.

Along this continuation, $\lambda_s$ is close to $d_{n_s}(isb)$,
where the index $n_s$ may change with $s$.
Our key observation is that $\lambda_s+s^2|b|^2$ stays in one fixed
vertical strip (Remark~\ref{rem:shifted-strips}). In particular, there exists a nonnegative integer $N$ such that 
\[
 |n_s|^2=N, \text{ for all } s\geq s_0.
\]

By Lemma~\ref{lem:unique-index}, there exist $s_1>s_0$ ($s_1$ depends on $N$)
and $n_*\in\Z^d$ such that $n_s=n_*$ for all $s\ge s_1$.
Moreover, $\lambda_s$ lies on a simple holomorphic branch
(see the blue part of Figure~\ref{fig:intro-ray}), and 
for $s\ge s_1$,  
\[
 \lambda_s=\Lambda(n_*+isb).
\]
\begin{figure}[htbp]
	\centering
	\begin{tikzpicture}[x=1cm,y=1cm,>=stealth,font=\small,
	selected/.style={draw=blue,line width=1pt},
	continuous/.style={draw=red!80!black,line width=1pt},
	simple/.style={draw=black!80,line width=1pt}]
	\coordinate (start) at (1.6,3.3);
	\coordinate (p0) at (1.6,1.15);
	\coordinate (transition) at (5.3,3.8);
	\draw[->,black!70] (0.35,0) -- (12.1,0) node[right] {$s$};
	\draw[dashed,black!35] (1.6,0) -- (1.6,3.9);
	\draw[dashed,black!35] (5.3,0) -- (5.3,4.45);
	\draw (1.6,0.06) -- (1.6,-0.06) node[below=4pt] {$s_0$};
	\draw (5.3,0.06) -- (5.3,-0.06) node[below=4pt] {$s_1$};
	
	\draw[continuous] (0.65,3.68) -- (start)
	-- (1.95,3.0) -- (2.35,3.5)
	-- (2.75,3.05) -- (3.15,3.62)
	-- (3.55,3.25);
	\draw[selected] (3.55,3.25) -- (3.95,3.65)
	.. controls (4.2,3.9) and (4.8,3.8) .. (transition);
	\draw[selected,->] (transition)
	.. controls (6.5,3.8) and (7.4,4.23) .. (8.25,4.65);
	\draw[selected] (8.25,4.65)
	.. controls (9.1,5.08) and (9.9,5.65) .. (10.6,6.05);
	\fill[red!80!black] (start) circle (1.7pt);
	\node[anchor=north east] at (1.45,3.03) {$\lambda_{s_0}$};
	\node at (3.55,4.45) {continuous};
	\node[rotate=27] at (7.9,4.95) {holomorphic};
	\node[anchor=west,align=left] at (10.75,5.97)
	{$\lambda_s=\Lambda(n_*+isb)$\\[-1pt]\footnotesize close to $d_{n_*}(isb)$};
	\node at (3.45,2.45) {$|n_s|^2=|n_{s_0}|^2$};
	\node at (6.6,3.25) {$n_s=n_*$};
	
	\draw[simple] (0.65,1.17)
	.. controls (0.95,1.14) and (1.3,1.14) .. (p0)
	.. controls (4.0,1.2) and (5.8,1.8) .. (7.3,2.65)
	.. controls (8.4,3.27) and (9.05,3.75) .. (9.6,4.2);
	\node[rotate=29] at (7.1,2.02) {holomorphic};
	\fill[black!80] (p0) circle (1.7pt);
	\node[below right=2pt,fill=white,inner sep=1pt] at (p0) {$p_0$};
	\node[anchor=west,align=left] at (9.75,4.13)
	{$\Lambda(isb)$\\[-1pt]\footnotesize close to $d_0(isb)$};
	\end{tikzpicture}
	\caption{Spectral branches along $k=isb$.
		The eigenvalue $\lambda_s$ is continuous for $s\ge s_0$.
		For $s\ge s_1$, $n_s=n_*$ and $\lambda_s=\Lambda(n_*+isb)$
		is simple and holomorphic.
		The holomorphic branch $\Lambda(isb)$ passes through $p_0$.}
	\label{fig:intro-ray}
\end{figure}
\FloatBarrier
By the integer translation invariance of $Y$, shifting the
quasimomentum from $isb$ to $n_*+isb$ leads to
\[(n_*+isb,\Lambda(n_*+isb)) \in Y.\]
We then continue $\Lambda(k)$ from $k=n_*+isb$ to $k=isb$,
and along the ray to $k=is_0b$.
Thus $p_0=(is_0b,\Lambda(is_0b))\in Y$, completing the proof.

\begin{figure}[htbp]
\centering
\begin{tikzpicture}[x=1cm,y=1cm,>=stealth,font=\small,
  every node/.style={inner sep=4pt},
  patharrow/.style={->,line width=.7pt}]
  \node (initial) at (0,2.1) {$(isb,\lambda_s)$};
  \node[above=3pt] at (initial.north) {$\lambda_s=\Lambda(n_*+isb)$};
  \node (shifted) at (7.2,2.1)
    {$(n_*+isb,\Lambda(n_*+isb))$};
  \node (principal) at (7.2,0) {$(isb,\Lambda(isb))$};
  \node (fixed) at (0,0) {$p_0$};
  \draw[patharrow] (initial.east) -- node[above] {integer shift}
    (shifted.west);
  \draw[patharrow] (shifted.south) --
    node[right,align=left] {holomorphic\\continuation\\at fixed $s$}
    (principal.north);
  \draw[patharrow] (principal.west) --
    node[above] {holomorphic continuation} node[below] {$s_0\leftarrow s$}
    (fixed.east);
\end{tikzpicture}
\caption{ Holomorphic continuation  along paths.}
\label{fig:intro-shift}
\end{figure}
\FloatBarrier

\section{Basics of analytic geometry and spectral theory}\label{sec:basics}
\subsection{Analytic geometry}
A set $A\subset\C^N$ is \emph{complex analytic} if for every
$p\in\C^N$ there is an open set $U$ containing $p$ such that
\[
 A\cap U=\{z\in U:f_1(z)=\cdots=f_r(z)=0\},
 \qquad f_j\in\mathcal O(U),
\]
where $\mathcal O(U)$ denotes the holomorphic functions on $U$.

A nonempty complex analytic set $A\subset\C^N$ is
\emph{irreducible} if  $A=Y_1\cup Y_2$ with
$Y_1,Y_2$ complex analytic subsets of $\C^N$ implies either
$Y_1=A$ or $Y_2=A$.
An \emph{irreducible component of $A$} is a maximal
irreducible complex analytic subset of $A$.

\subsection{Basics of spectral theory}
Since
\[
 B_{V-c}=\{(k,\lambda-c):(k,\lambda)\in B_V\},
\]
 it suffices to assume
\begin{equation}\label{eq:mean}
\int_{\T^d}V dx=0.
\end{equation}

For $g\in L^\infty(\T^d)$, define its Fourier coefficients by
\[
 \widehat g(r)=\int_{\T^d}g(x)e^{-ir\cdot x}\,dx,
 \qquad r\in\Z^d.
\]
We also use $g$ for the multiplication operator
$(gu)(x)=g(x)u(x)$ on $L^2(\T^d)$. Then
\begin{equation}\label{eq:multiplication-operator}
 \|g\|=\|g\|_\infty.
\end{equation}
Here $\|\cdot\|$ denotes the operator norm. 

In the Fourier basis $e_n(x)=e^{in\cdot x}$, we identify $g$ with
its matrix $g(m,n)=\widehat g(m-n)$ on $\ell^2(\Z^d)$: for any $f\in \ell^2(\Z^d)$
\begin{equation*}
(gf)(n)=\sum_{m\in \Z^d} \widehat{g}(n-m) f(m).
\end{equation*}
This convention applies
to $V$ and to $P,E$ below.

Recall that $d_n(k)=(n+k)^2$. In the Fourier/matrix  representation,
\begin{equation}\label{eq:fourier-representation0}
H_0(k)=\operatorname{diag}_n d_n(k),
\end{equation}
and 
\begin{equation}\label{eq:fourier-representation}
 H_V(k)=\operatorname{diag}_n d_n(k)+V.
\end{equation}

For $z\notin\{d_n(k):n\in\Z^d\}$, the free resolvent is
\[
R_0(k,z)=\operatorname{diag}_n(d_n(k)-z)^{-1}
\]
and
\[
\|R_0(k,z)\|=\bigl(\inf_n|d_n(k)-z|\bigr)^{-1}.
\]

Denote by  $M=\|V\|_\infty$.

If $\inf_n|d_n(k)-z|>M$, the Neumann series gives
\begin{equation}\label{eq:compact-inverse}
(H_V(k)-z)^{-1}
=R_0(k,z)[I+VR_0(k,z)]^{-1}.
\end{equation}
Also
\begin{equation}\label{eq:spectral-enclosure}
\spec(H_0(k)+tV)
\subset\bigcup_{n\in\Z^d}
\{z:|z-d_n(k)|\le tM\},\qquad 0\le t\le1.
\end{equation}

We use the following standard facts
\cite{Kato}.

\begin{proposition}\label{prop:local-polynomial}
	For every $k\in\C^d$, $H_V(k)$ has compact resolvent. Its spectrum
	consists of isolated eigenvalues of finite algebraic multiplicity
	$m_{\rm alg}(k,\lambda)$, with no finite accumulation point.
	
	Near every $(k_*,\lambda_*)\in B_V$, there are a neighborhood
	$W_\alpha$ of $k_*$, a disk $\Delta_\alpha$ centered at $\lambda_*$,
	and an $\ell_\alpha\times\ell_\alpha$ holomorphic matrix $A_\alpha(k)$,
	where $\ell_\alpha=m_{\rm alg}(k_*,\lambda_*)$, such that the polynomial
	\[
	P_\alpha(k,\lambda)=\det(\lambda I-A_\alpha(k))
	\]
	satisfies
	\[
	B_V\cap(W_\alpha\times\Delta_\alpha)=\{P_\alpha=0\}
	\]
	and
	\[
	P_\alpha(k_*,\lambda)=(\lambda-\lambda_*)^{\ell_\alpha}.
	\]

\end{proposition}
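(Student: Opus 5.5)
The plan is the standard Kato route: compact resolvent first, then a Riesz projection, then reduction to a finite matrix.

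\emph{Compact resolvent.} Fix $k\in\C^d$. Since $\Rea d_n(k)=|n+\Rea k|^2-|\Ima k|^2$ and $|\Ima d_n(k)|\le 2|n+\Rea k|\,|\Ima k|$, we have $|d_n(k)|\to\infty$ as $|n|\to\infty$. For $z=-\mu$ with $\mu>0$ large, $\inf_n|d_n(k)-z|>M$. Hence \eqref{eq:compact-inverse} applies and gives $(H_V(k)-z)^{-1}=R_0(k,z)[I+VR_0(k,z)]^{-1}$. The factor $R_0(k,z)$ is a diagonal operator whose entries tend to zero, so it is compact, and the product is compact. The resolvent identity then propagates compactness to every $z$ in the resolvent set. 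By the Riesz--Schauder theory, the spectrum consists of isolated eigenvalues of finite algebraic multiplicity with no finite accumulation point.

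\emph{Riesz projection.} Let $(k_*,\lambda_*)\in B_V$ and $\ell=m_{\rm alg}(k_*,\lambda_*)$. Choose a circle $C=\partial\Delta$ around $\lambda_*$ that contains no other eigenvalue of $H_V(k_*)$.
\begin{itemize}
\item The family $H_V(k)-H_V(k_*)=2(k-k_*)\cdot(-i\nabla)+k^2-k_*^2$ is relatively bounded with respect to $H_V(k_*)$, with relative bound tending to $0$ as $k\to k_*$. So $H_V(k)$ is a holomorphic family of type (A).
\item Consequently, for $k$ in a small neighborhood $W$ of $k_*$, the circle $C$ stays in the resolvent set.
\item There $(H_V(k)-z)^{-1}$ is jointly holomorphic in $(k,z)\in W\times C$.
\item The Riesz projection $P(k)=-\frac1{2\pi i}\oint_C(H_V(k)-z)^{-1}dz$ is holomorphic in $k$, and its rank is constant, equal to $\operatorname{rank}P(k_*)=\ell$, by continuity of the rank of nearby projections.
\end{itemize}

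\emph{Reduction to an $\ell\times\ell$ matrix.} This step, producing a holomorphic frame, is the one technical point. Use Kato's transformation function $U(k)$: a holomorphic invertible operator with $U(k)P(k_*)=P(k)U(k)$, obtained by solving a linear holomorphic ODE along rays from $k_*$ (or defined directly by Kato's explicit formula). Fix a basis $e_1,\dots,e_\ell$ of $\operatorname{Ran}P(k_*)$ and a dual basis $\phi_1,\dots,\phi_\ell$ for $P(k_*)$. Define the $\ell\times\ell$ holomorphic matrix
\[A_\alpha(k)_{ij}=\phi_i\bigl(U(k)^{-1}H_V(k)P(k)U(k)e_j\bigr).\]
Then $A_\alpha(k)$ represents the restriction $H_V(k)|_{\operatorname{Ran}P(k)}$. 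This restriction is bounded, because $H_V(k)P(k)=-\frac1{2\pi i}\oint_C z(H_V(k)-z)^{-1}dz$ is bounded and holomorphic.

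\emph{Conclusion.} The spectrum of $H_V(k)$ inside $\Delta$, counted with algebraic multiplicity, is exactly the spectrum of $A_\alpha(k)$. Hence
\[B_V\cap(W\times\Delta)=\{\det(\lambda I-A_\alpha(k))=0\}.\]
At $k=k_*$, the only eigenvalue of $H_V(k_*)$ in $\Delta$ is $\lambda_*$, so the characteristic polynomial is $(\lambda-\lambda_*)^{\ell}$. This is the required identity $P_\alpha(k_*,\lambda)=(\lambda-\lambda_*)^{\ell_\alpha}$.
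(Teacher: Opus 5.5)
Your proposal is correct and follows essentially the same route as the paper, which (citing Kato) takes the Riesz projection $\Pi_\alpha(k)$ over a circle isolating $\lambda_*$ and represents $H_V(k)$ on its $\ell_\alpha$-dimensional range by a holomorphic matrix. The only difference is the choice of frame: the paper uses the projected basis $\Pi_\alpha(k)u_1,\dots,\Pi_\alpha(k)u_{\ell_\alpha}$, which remains a basis of $\operatorname{ran}\Pi_\alpha(k)$ for $k$ near $k_*$, so it does not need Kato's transformation function. Your transformation-function frame works equally well, also in several variables via the explicit formula.
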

 
\begin{remark}
By discreteness of the spectrum, choose a disk $\Delta_\alpha$
centered at $\lambda_*$ with
$\overline{\Delta_\alpha}\cap\spec H_V(k_*)=\{\lambda_*\}$.
After shrinking $W_\alpha$, $\partial\Delta_\alpha\cap\spec H_V(k)
=\varnothing$ for all $k\in W_\alpha$. The Riesz projection is defined as
\begin{equation}\label{eq:riesz-projection}
 \Pi_\alpha(k)=\frac1{2\pi i}\int_{\partial\Delta_\alpha}
       (z-H_V(k))^{-1}\,dz.
\end{equation}
It has rank $\ell_\alpha$. Choose a basis $u_1,\ldots,u_{\ell_\alpha}$ of
$\operatorname{ran}\Pi_\alpha(k_*)$ and set
$U_\alpha(k)c=\sum_{j=1}^{\ell_\alpha}c_j\Pi_\alpha(k)u_j$
for $c\in\C^{\ell_\alpha}$. After shrinking $W_\alpha$,
$U_\alpha(k):\C^{\ell_\alpha}\to\operatorname{ran}\Pi_\alpha(k)$
is an isomorphism. Define
\begin{equation}\label{eq:riesz-matrix}
 A_\alpha(k)=U_\alpha(k)^{-1}H_V(k)U_\alpha(k).
\end{equation}
By \eqref{eq:riesz-projection} and \eqref{eq:riesz-matrix},
$A_\alpha$ is holomorphic, and the roots of $P_\alpha(k,\cdot)$
are precisely the eigenvalues of $H_V(k)$ in $\Delta_\alpha$,
with the same algebraic multiplicities.
\end{remark}

\begin{lemma}\cite[Chapter 1]{Simon}\label{lem:singular-values}
Let $A$ be a compact operator on a complex Hilbert space $\mathcal H$.
Let $\lambda_j(A)$ be its nonzero eigenvalues, counted with algebraic
multiplicity, and let $s_j(A)$ be its singular values, listed in
nonincreasing order.
Then, for every $q\geq 1$,
\[
 \sum_j|\lambda_j(A)|^q\le\sum_j s_j(A)^q.
\]

For every bounded operator $B$ on $\mathcal H$,
\[
 s_j(AB)\le\|B\|s_j(A).
\]
\end{lemma}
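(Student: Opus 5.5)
The second inequality is the easier one and I would prove it first, from the approximation-number characterization of singular values:
\[
s_j(A)=\min\{\|A-F\|:\operatorname{rank}F<j\},
\]
which follows from the Schmidt decomposition $A=\sum_j s_j(A)\langle\cdot,\phi_j\rangle\psi_j$. Given $F$ with $\operatorname{rank}F<j$, the operator $FB$ also has rank $<j$. Hence
\[
s_j(AB)\le\|AB-FB\|\le\|A-F\|\,\|B\|.
\]
Minimizing over $F$ gives $s_j(AB)\le\|B\|\,s_j(A)$.

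For the first inequality (Weyl's inequality), I would first reduce to a multiplicative statement on finite sections. Order the nonzero eigenvalues so that $|\lambda_1|\ge|\lambda_2|\ge\cdots$. Fix $n$ no larger than the number of nonzero eigenvalues, counted with algebraic multiplicity. The sum of the generalized eigenspaces for $\lambda_1,\dots,\lambda_n$ (taking full eigenspaces as needed) is a finite-dimensional $A$-invariant subspace. A Schur (Gram--Schmidt) argument inside it produces orthonormal vectors $e_1,\dots,e_n$ with the following properties:
\begin{itemize}
\item $L_k=\operatorname{span}\{e_1,\dots,e_k\}$ is $A$-invariant for each $k$;
\item $\langle Ae_k,e_k\rangle=\lambda_k$.
\end{itemize}
Let $P_n$ be the orthogonal projection onto $L_n$. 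Then $P_nAP_n|_{L_n}$ is upper triangular with diagonal $\lambda_1,\dots,\lambda_n$. Taking determinants gives
\[
\prod_{k=1}^n|\lambda_k|=|\det(P_nAP_n|_{L_n})|=\prod_{k=1}^n s_k(P_nAP_n)\le\prod_{k=1}^n s_k(A),
\]
where the last step uses $s_k(P_nAP_n)\le s_k(A)$. This follows from the approximation-number formula as above, applied on both sides, since $\|P_n\|\le 1$.

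To pass from products to the $q$-th power sums, set $x_k=\log|\lambda_k|$ and $y_k=\log s_k(A)$, both nonincreasing. The previous step gives the weak majorization $\sum_{k\le n}x_k\le\sum_{k\le n}y_k$ for every $n$. By the Hardy--Littlewood--P\'olya (Karamata--Tomi\'c) inequality, this implies $\sum_{k\le n}\varphi(x_k)\le\sum_{k\le n}\varphi(y_k)$ for every convex nondecreasing $\varphi$. I would prove this by Abel summation, using that the slopes of $\varphi$ are nonincreasing along the ordered sequences. Applying it with $\varphi(t)=e^{qt}$, which is convex and increasing for $q\ge1$ (indeed for all $q>0$), gives
\[
\sum_{k\le n}|\lambda_k|^q\le\sum_{k\le n}s_k(A)^q\le\sum_j s_j(A)^q,
\]
and letting $n\to\infty$ (monotone convergence) finishes the proof.

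The main obstacle is this majorization step, together with the bookkeeping in the Schur construction (choosing the triangularizing basis in infinite dimensions). The rest is standard.
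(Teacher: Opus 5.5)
Your proof is correct. The approximation-number argument gives $s_j(AB)\le\|B\|s_j(A)$ and its left-multiplication analogue, which you need for $s_k(P_nAP_n)\le s_k(A)$. The Schur-basis product inequality $\prod_{k\le n}|\lambda_k|\le\prod_{k\le n}s_k(A)$, followed by the convex majorization step with $\varphi(t)=e^{qt}$, is the standard proof of Weyl's inequality.

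The paper itself gives no proof and only cites Simon, \emph{Trace Ideals}, Chapter~1. Your argument is essentially the one behind that cited result, so it matches the intended route.
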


\section{Projection of irreducible components}\label{sec:projection}
In this section, we use Ueda's lemma to prove the following theorem.
\begin{theorem}\label{thm:projection}
	For $m\in\Z^d$, let $T_m(k,\lambda)=(k+m,\lambda)$.
Let $\Gamma$ be an irreducible component (nonempty) of $B_V$, and set
\[
 Y=\bigcup_{m\in\Z^d}T_m(\Gamma).
\]
Define
\[
 E_Y(k)=\{\lambda\in\C:(k,\lambda)\in Y\}
\]
and
\[
 \mathcal O_Y=\{k\in\C^d:E_Y(k)\ne\varnothing\}.
\]
Then $\mathcal O_Y=\C^d$.
\end{theorem}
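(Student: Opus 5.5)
The plan is to show that the projection $\pi(Y)\subset\C^d$, where $\pi(k,\lambda)=k$, is both open and closed in $\C^d$. Since $\C^d$ is connected and $\mathcal O_Y=\pi(Y)\neq\varnothing$ (because $\Gamma\neq\varnothing$), this gives $\mathcal O_Y=\C^d$.

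First I record the structure of $Y$. The family $\{T_m(\Gamma)\}_{m\in\Z^d}$ consists of irreducible components of $B_V$, since $T_m$ is a biholomorphism preserving $B_V$. Components of an analytic set form a locally finite family. Hence $Y$ is a union of irreducible components of $B_V$, and it is a closed complex analytic subset of $\C^{d+1}$ of pure dimension $d$. The dimension count holds because, by Proposition~\ref{prop:local-polynomial}, $B_V$ is locally a hypersurface $\{P_\alpha=0\}$ in $\C^{d+1}$. I will also use that the complement $Y'$ of $Y$, meaning the union of the remaining components, is analytic, and that $Y\cap Y'$ has dimension at most $d-1$.

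\emph{Openness.} Take $(k_*,\lambda_*)\in Y$ and work in the chart $W_\alpha\times\Delta_\alpha$ of Proposition~\ref{prop:local-polynomial}. There $P_\alpha$ is a Weierstrass polynomial in $\lambda$ of degree $\ell_\alpha$. Decompose it into irreducible factors in the local ring at $(k_*,\lambda_*)$. Each factor is again a Weierstrass polynomial in $\lambda$, since $P_\alpha(k_*,\lambda)=(\lambda-\lambda_*)^{\ell_\alpha}$. The germ of $Y$ at this point is the zero set of a subproduct $Q$ of these factors, and $Q$ is nonconstant because $(k_*,\lambda_*)\in Y$. Being monic in $\lambda$ with all roots in $\Delta_\alpha$, $Q(k,\cdot)$ has a root in $\Delta_\alpha$ for every $k$ near $k_*$. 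So a neighbourhood of $k_*$ lies in $\pi(Y)$. This step is essentially Ueda's lemma: a pure $d$-dimensional analytic subset of a local hypersurface that is finite over the base projects onto a neighbourhood.

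\emph{Closedness.} This is the step I expect to be the main obstacle. Let $k_j\to k_\infty$ with $(k_j,\lambda_j)\in Y$. I need the $\lambda_j$ to stay bounded, because then a subsequence converges, $Y$ is closed, and $k_\infty\in\pi(Y)$. Boundedness is not automatic: a spectral branch could escape to infinity. To rule this out I would replace $\Gamma$ by a cleverer argument and use the determinant. On a compact set $K\ni k_\infty$, I would write $B_V$ as the zero set of a regularized determinant $F(k,\lambda)=\det_{q}\bigl(I+V R_0(k,\lambda)\bigr)$, suitably normalized, with $q>d/2$. Lemma~\ref{lem:singular-values} controls the Schatten norms and gives growth bounds on $F$. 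The more robust alternative is to use Ueda's lemma in its global form: if $\pi|_Y$ failed to be surjective, then $\pi(Y)$ would be an open set with boundary, and near a boundary point the fibre $E_Y(k)$ escapes to infinity. I would then show that this contradicts the enclosure \eqref{eq:spectral-enclosure} combined with integer-translation invariance. The point is that for $k$ in a compact set and $|\lambda|$ large, the eigenvalues near $\lambda$ are within $M$ of $d_n(k)$. Translating by suitable $m$ then moves such points into a region of bounded $\lambda$ only if the branch is followed. Honestly, I expect the cleanest route is the following. Consider $Y$ in the quotient $(\C^d/\Z^d)\times\C$ and apply Ueda's lemma: an analytic hypersurface in $X\times\C$ whose projection misses a nonempty open subset must have fibres whose complement is pseudoconcave. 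This forces a pure-dimensional $Y$ with nonempty projection to project onto everything, provided $Y$ is locally given by Weierstrass polynomials, which the openness step supplies.

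Combining openness, closedness and connectedness of $\C^d$ gives $\mathcal O_Y=\C^d$.
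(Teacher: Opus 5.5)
Your openness step is correct. It is exactly the local description \eqref{eq:selected}--\eqref{eq:selected-root}: a monic factor $Q_\alpha$ of $P_\alpha$ whose roots persist for $k$ near $k_*$. That is Weierstrass preparation, not Ueda's lemma.

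\textbf{The gap.} Closedness is the real content of the theorem, and none of your sketched routes proves it.
\begin{itemize}
\item Integer translations $T_m$ leave $\lambda$ unchanged, so they cannot bring fibre points escaping to infinity back into a bounded region. In the paper, translation invariance is used only to make $Y$ a closed analytic set; the projection argument works for any union of components of $B_V$.
\item The global principle you attribute to Ueda is false as stated. The set $\{(k,\lambda)\in\C^d\times\C:(e^{2\pi i k_1}-1)\lambda=1\}$ is $\Z^d$-periodic and pure $d$-dimensional. It is locally the graph of a holomorphic function of $k$, hence locally a degree-one Weierstrass polynomial. Yet its projection is $\{k:k_1\notin\Z\}$. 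So pure dimension, local Weierstrass structure and periodicity cannot give closedness.
\item What you actually need is quantitative control of $E_Y(k)$ near $\lambda=\infty$, uniform for $k$ in compact sets, together with a rigidity mechanism that turns this control into nonemptiness of the fibre.
\item Your regularized determinant cuts out $B_V$, not $Y$. You do not explain how a zero would be forced onto $Y$ rather than onto the other components.
\item Fibres are infinite in general. So what must be excluded is that the whole fibre $E_Y(k_j)$ drifts to infinity, not that one chosen sequence $\lambda_j$ does.
\end{itemize}

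\textbf{The missing ingredients, as the paper supplies them.} Take $|k|<R$, $p=\lfloor d/2\rfloor+2$, and $\tau$ large.
\begin{itemize}
\item By the enclosure \eqref{eq:spectral-enclosure}, every $\lambda\in\spec H_V(k)$ satisfies $\Rea(\lambda+\tau)^{-p}>0$. The reason is that the points $(n+k)^2+\tau$ lie in a thin sector around the positive real axis.
\item By \eqref{eq:compact-inverse} and Lemma~\ref{lem:singular-values}, $\sum_\lambda m_{\rm alg}(k,\lambda)|\lambda+\tau|^{1-p}\le K$ uniformly on $|k|<R$. Combined with $m_Y\le m_{\rm alg}$, this gives the tail condition \eqref{eq:ueda-condition}.
\item Ueda's lemma then makes $T(k)=\sum_{\lambda\in E_Y(k)}m_Y(k,\lambda)(\lambda+\tau)^{-p}$ holomorphic on $\{|k|<R\}$. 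It satisfies $\Rea T\ge0$, with $\Rea T(k)=0$ exactly when $E_Y(k)=\varnothing$.
\item Since $\Rea T$ is harmonic and $\Rea T(k_*)>0$, the minimum principle gives $\Rea T>0$ throughout the ball.
\end{itemize}
In your open/closed language: $\Rea T$ would vanish at a boundary point of the open set $\mathcal O_Y$, which would be an interior minimum of a harmonic function. The positivity of this power sum is precisely what rules out escape to infinity.
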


\subsection{Local equations}
Fix $\Gamma$ and $Y$ as in Theorem~\ref{thm:projection}.
Since $T_m(B_V)=B_V$, each $T_m(\Gamma)$ is an irreducible
component of $B_V$. Then
distinct sets $T_m(\Gamma)$ form a locally finite family \cite{Chirka}.
Thus $Y$ is closed and analytic.
 By the definition of $Y$, 
\begin{equation}\label{eq:invariance}
(k,\lambda)\in Y
\quad\text{if and only if}\quad
(k+m,\lambda)\in Y.
\end{equation}
Fix $(k_*,\lambda_*)\in Y$ and consider a  neighborhood
$W_\alpha\times \Delta_\alpha$  as in  Proposition \ref{prop:local-polynomial}.
Let $Q_\alpha$ be the product of the distinct monic local irreducible
factors of $P_\alpha$ whose zero sets belong to $Y$. It is monic in
$\lambda$, with holomorphic coefficients in $k$
\cite[Chapter 1]{Chirka}.
Then $Q_\alpha\mid P_\alpha$. Set $q_\alpha=\deg_\lambda Q_\alpha$. 
By shrinking $W_\alpha\times \Delta_\alpha$, one has that
\begin{equation}\label{eq:selected}
Y\cap(W_\alpha\times\Delta_\alpha)=\{Q_\alpha=0\}
\end{equation}
and
\begin{equation}\label{eq:selected-root}
Q_\alpha(k_*,\lambda)=(\lambda-\lambda_*)^{q_\alpha}, q_\alpha\ge1.
\end{equation}

On overlaps of local coordinates, we have 
\begin{equation}\label{eq:local-units}
Q_\alpha=u_{\alpha\beta}Q_\beta,
\end{equation}
where $u_{\alpha\beta}$ is holomorphic and nowhere zero.

For fixed $k$, we say that an isolated zero $\lambda$ of $f(k,\cdot)$ has order
$\operatorname{ord}_{\mu=\lambda}f(k,\mu)=m$ if
\[
 \partial_\mu^j f(k,\lambda)=0\quad(0\le j<m),\text{ and }
 \partial_\mu^m f(k,\lambda)\ne0.
\]

For all $\lambda\in E_Y(k)$, set
\[
 m_Y(k,\lambda):=\operatorname{ord}_{\mu=\lambda}Q_\alpha(k,\mu).
\]
By \eqref{eq:local-units}, this definition is independent of $\alpha$.
By $Q_\alpha\mid P_\alpha$ and Proposition~\ref{prop:local-polynomial},
we obtain
\begin{equation}\label{eq:multiplicity-bound}
 1\le m_Y(k,\lambda)\le m_{\rm alg}(k,\lambda).
\end{equation}

\subsection{Ueda's lemma}
\begin{lemma}[Ueda {\cite[Lemma 3, p.~260]{Ueda}}]\label{lem:ueda}
Let $G\subset\C^d$ be open, and let $A\subset G\times\C$.
Suppose that an open cover $\{U_\alpha\}$ of $G\times\C$
by connected sets and $f_\alpha\in\mathcal O(U_\alpha)$,
$f_\alpha\not\equiv0$, satisfy
\[
 A\cap U_\alpha=\{(k,\lambda)\in U_\alpha:f_\alpha(k,\lambda)=0\}
\]
and
\[
 f_\alpha=u_{\alpha\beta}f_\beta,
\]
where $u_{\alpha\beta}$ is holomorphic and nowhere zero on
$U_\alpha\cap U_\beta$.
Assume that $E(k)=\{\lambda:(k,\lambda)\in A\}$ is discrete for every $k\in G$.
For $\lambda\in E(k)$, set
\[
 m(k,\lambda)=\operatorname{ord}_{\mu=\lambda}f_\alpha(k,\mu),
 \qquad (k,\lambda)\in U_\alpha.
\]
This is finite by discreteness and independent of $\alpha$.
Let $\lambda_{\mathrm{ref}}\in\C$ satisfy
$\lambda_{\mathrm{ref}}\notin E(k)$ for all $k\in G$.
Suppose that, for some integer $p\ge1$ and every compact $K\subset G$,
\begin{equation}\label{eq:ueda-condition}
 \lim_{r\to\infty}\sup_{k\in K}
 \sum_{\substack{\lambda\in E(k)\\|\lambda-\lambda_{\mathrm{ref}}|>r}}
 m(k,\lambda)|\lambda-\lambda_{\mathrm{ref}}|^{-p}=0.
\end{equation}
Then
\begin{equation}\label{eq:ueda}
 T_p(k)=\sum_{\lambda\in E(k)}m(k,\lambda)(\lambda-\lambda_{\mathrm{ref}})^{-p}
\end{equation}
converges absolutely and is holomorphic on $G$.
\end{lemma}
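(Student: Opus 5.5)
The plan is to reduce holomorphy of $T_p$ to the local theory of Weierstrass polynomials: finite truncations of the sum \eqref{eq:ueda} are holomorphic by the argument principle, and \eqref{eq:ueda-condition} controls the tail. Fix $k_0\in G$ and a compact ball $K\ni k_0$ in $G$.

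\emph{Step 1: closedness and absolute convergence.} Since each $A\cap U_\alpha$ is the zero set of $f_\alpha$ and the $U_\alpha$ cover $G\times\C$, the set $A$ is closed in $G\times\C$. Hence $A\cap(K\times\overline{D(\lambda_{\mathrm{ref}},R)})$ is compact for every $R$. The set $E(k)\cap\overline{D(\lambda_{\mathrm{ref}},R)}$ is finite because $E(k)$ is discrete and closed. By \eqref{eq:ueda-condition}, the terms with $|\lambda-\lambda_{\mathrm{ref}}|>r$ sum to at most $\varepsilon(r)$ uniformly on $K$, with $\varepsilon(r)\to0$. Together these give absolute convergence of $T_p(k)$ for every $k\in K$.

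\emph{Step 2: holomorphic truncations.} Choose $R$ so that the circle $|\mu-\lambda_{\mathrm{ref}}|=R$ misses $E(k_0)$. By compactness and closedness of $A$, it then misses $E(k)$ for all $k$ in a neighborhood $N_R$ of $k_0$. Cover the finitely many points of $E(k_0)$ inside this circle by small disjoint disks $D_j$, each with $\{k_0\}\times\overline{D_j}$ contained in a single connected $U_{\alpha_j}$. After shrinking $N_R$, two things hold: $f_{\alpha_j}(k,\cdot)$ has no zeros on $\partial D_j$, and every point of $E(k)$ inside the circle lies in some $D_j$. Then
\[
F_R(k):=\sum_{\substack{\lambda\in E(k)\\|\lambda-\lambda_{\mathrm{ref}}|<R}}m(k,\lambda)(\lambda-\lambda_{\mathrm{ref}})^{-p}
=\sum_j\frac1{2\pi i}\int_{\partial D_j}(\mu-\lambda_{\mathrm{ref}})^{-p}\frac{\partial_\mu f_{\alpha_j}(k,\mu)}{f_{\alpha_j}(k,\mu)}\,d\mu ,
\]
because the order $m(k,\lambda)$ is exactly the multiplicity counted by the argument principle. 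This expression is holomorphic on $N_R$. The cocycle relation $f_\alpha=u_{\alpha\beta}f_\beta$ ensures the answer does not depend on the chosen $\alpha_j$. Moreover $|T_p-F_R|\le\varepsilon(R)$ on $N_R\cap K$.

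\emph{Step 3 (main obstacle): passing to the limit.} The neighborhoods $N_R$ shrink as $R\to\infty$, so uniform convergence of holomorphic functions on a fixed set is not immediate.

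First I would prove continuity of $T_p$. Near any $k$, $T_p$ is within $\varepsilon(R)$ of the continuous function $F_R$ for arbitrarily large $R$, so $T_p$ is a local uniform limit of continuous functions.

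For holomorphy I would not use $F_R$ directly. Instead I would fix one admissible $R_0$ and write $T_p=F_{R_0}+H$ on $N_{R_0}$, where $H$ is the tail over $|\lambda-\lambda_{\mathrm{ref}}|>R_0$. Then I would show $H$ is holomorphic on $N_{R_0}$ as follows.
\begin{enumerate}
\item For each $k_1\in N_{R_0}$ and large $R$, pick an admissible radius at $k_1$. On the resulting neighborhood, the annular sum $F_R-F_{R_0}$ is holomorphic by the same contour formula.
\item The annular sums differ from $H$ by at most $\varepsilon(R)$.
\item Hence $H$ is a continuous function that, near every point, is approximable within any $\varepsilon>0$ by holomorphic functions.
\item On each complex line, apply Morera's theorem on triangles small enough to lie in one such neighborhood, letting $\varepsilon\to0$ with the triangle fixed. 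This is possible because the neighborhood depends on $R$ but the target triangle can be chosen after $k_1$; I expect this bookkeeping to be the delicate point.
\end{enumerate}
Osgood's lemma (continuous and separately holomorphic implies holomorphic) then gives holomorphy of $T_p$ on $G$.

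If the Morera bookkeeping fails, the fallback is to exploit the local Weierstrass structure more globally. Near each $(k_0,\lambda)$ the set $A$ is the zero set of a Weierstrass polynomial, so the annular power sums are holomorphic in $k$ away from the real-codimension-one set where roots cross the circle $|\lambda-\lambda_{\mathrm{ref}}|=R$. Each jump across that set is bounded by the tail, and a Radó-type removability argument for uniform limits would then finish the proof.
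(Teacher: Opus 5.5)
The paper does not prove this lemma; it quotes it from Ueda, so your argument has to stand on its own. Your Steps 1 and 2, the continuity of $T_p$, and the reduction to complex lines via Osgood are fine. The holomorphy argument in Step 3, however, has a genuine gap.

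Property (3) carries no information. Every continuous function is, near each point, within any $\varepsilon$ of a holomorphic function on a neighborhood depending on $\varepsilon$: take the constant $h(k_1)$ on a small enough ball. For example, $h(\zeta)=\bar\zeta$ has this property and is not holomorphic.

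Morera in (4) needs the opposite order of quantifiers: a triangle fixed in advance, with $\varepsilon$-approximants holomorphic on a neighborhood of it for all small $\varepsilon$. Your approximant $F_R-F_{R_0}$ is holomorphic only on a set where no point of $E(k)$ crosses the circle $|\mu-\lambda_{\mathrm{ref}}|=R$. In general that set shrinks to $k_1$ as $R\to\infty$. For instance, take $B_0$ with $d\ge2$ and $k_1=0$. The points $(n+k)^2$ move at speed about $2|n|$ in $k$, while consecutive values of $|n|^2$ are $o(|n|)$ apart. So on any fixed ball around $k_1$, every sufficiently large circle is crossed. Choosing the triangle after $k_1$ does not help, because it would have to be chosen independently of $R$. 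This is not bookkeeping; it is the whole content of the lemma.

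The fallback does not close the gap either. Rad\'o's theorem concerns continuous functions holomorphic off their own zero set, which is not your situation. The crossing sets $\{k:E(k)\cap\{|\mu-\lambda_{\mathrm{ref}}|=R\}\ne\varnothing\}$ are real hypersurfaces. They are not removable for continuous functions, and they move with $R$. What must be shown is that the jumps of $F_R$ across them wash out in the limit.

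On a complex line, Stokes on $A$ shows that $\oint_{\partial\Delta}F_R\,d\zeta$ equals, up to sign, the integral of $(\lambda-\lambda_{\mathrm{ref}})^{-p}\,d\zeta$ over the part of $A\cap\{|\lambda-\lambda_{\mathrm{ref}}|=R\}$ lying over $\Delta$. That is at most $R^{-p}$ times the length of the crossing set counted with multiplicity. This length is governed by $|\partial_\zeta\lambda|$, and \eqref{eq:ueda-condition} gives no pointwise control of it. The missing ingredient is an averaged bound on the speed of the roots that survives $R\to\infty$.

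One way to get such a bound:
\begin{itemize}
\item Work on a line $k=k_1+\zeta v$, and take $0\le\psi\in C_c^\infty$ and $F$ a smooth convex truncation of $u\mapsto e^{-pu}$.
\item Apply Stokes for the closed current $[A]$ to $\psi\,dd^cF(\log|\lambda-\lambda_{\mathrm{ref}}|)$. Use that $\log|\lambda-\lambda_{\mathrm{ref}}|$ is harmonic along $A$ and $F''\ge0$. This yields
\[
 \int\sum_{\lambda\in E(k_1+\zeta v)}m\,\psi(\zeta)\,|\lambda-\lambda_{\mathrm{ref}}|^{-p}\Bigl|\frac{\partial_\zeta\lambda}{\lambda-\lambda_{\mathrm{ref}}}\Bigr|^2\,dA(\zeta)<\infty .
\]
\item Replace the sharp truncation by a smooth cutoff $\chi(|\lambda-\lambda_{\mathrm{ref}}|/R)$, and call the truncated sum $T_{p,R}$.
\item Stokes and Cauchy--Schwarz bound $|\int T_{p,R}\,\partial_{\bar\zeta}\phi\,dA|$ by the square root of the tail sum over $R\le|\lambda-\lambda_{\mathrm{ref}}|\le 2R$ times the square root of this energy. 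This tends to $0$ by \eqref{eq:ueda-condition}.
\end{itemize}
Hence $\partial_{\bar\zeta}T_p=0$ weakly on each line, and Osgood finishes the proof.
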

 
\subsection{Proof of Theorem~\ref{thm:projection}}
\begin{proof}
Since $Y\ne\varnothing$, choose $(k_*,\lambda_*)\in Y$.
Fix $R>|k_*|$ and set $p=\lfloor d/2\rfloor+2$. Let
\[
 G=\{k\in\C^d:|k|<R\}.
\]
It suffices to prove $G\subset\mathcal O_Y$ for every  $R$.
By \eqref{eq:fourier-representation0} and \eqref{eq:spectral-enclosure}, for all $k\in G$ and
$\lambda\in\spec H_V(k)$,
\begin{equation}\label{eq:positive-weight}
 \Rea(\lambda+\tau)^{-p}>0,
\end{equation}
where $\tau$ is sufficiently large.

In particular, $-\tau\notin\spec H_V(k)$ for all $k\in G$.
Consider
\[
 T(k)=\sum_{\lambda\in E_Y(k)}m_Y(k,\lambda)(\lambda+\tau)^{-p}.
\]
By \eqref{eq:positive-weight},
\begin{equation}\label{g9}
T(k)=0\quad\text{if and only if}\quad E_Y(k)=\varnothing.
\end{equation}
The  eigenvalues of $(H_V(k)+\tau)^{-1}$ are
$(\lambda+\tau)^{-1}$, with the same algebraic multiplicities.
Since $p-1>d/2$, by \eqref{eq:fourier-representation0}, \eqref{eq:compact-inverse},
and Lemma~\ref{lem:singular-values}, we obtain
\begin{equation}\label{eq:schatten-sum}
\begin{aligned}
 K&:=\sup_{k\in G}\sum_{\lambda\in\spec H_V(k)}
 m_{\rm alg}(k,\lambda)|\lambda+\tau|^{1-p}\\
 &\le C\sup_{k\in G}\sum_{n\in\Z^d}|d_n(k)+\tau|^{1-p}<\infty.
\end{aligned}
\end{equation}
By \eqref{eq:multiplicity-bound} and \eqref{eq:schatten-sum},
\begin{equation}\label{eq:selected-count}
 \sup_{k\in G}
 \sum_{\substack{\lambda\in E_Y(k)\\|\lambda+\tau|>r}}
 m_Y(k,\lambda)|\lambda+\tau|^{-p}
 \le K/r\longrightarrow0.
\end{equation}
By  \eqref{eq:selected-count} and applying Lemma~\ref{lem:ueda}   with
$\lambda_{\mathrm{ref}}=-\tau$,  and local equations $f_\alpha=Q_\alpha$ near $Y$
and $f_\alpha=1$ off $Y$, 
we have that $T\in\mathcal O(G)$.
By \eqref{eq:positive-weight} and \eqref{g9}, 
\begin{equation}\label{g8}
 \Rea T\ge0,\text{ and }
\Rea T(k_*)>0.
\end{equation}
Since $\Rea T$ is harmonic,  by \eqref{g8},  we conclude that $\Rea T>0$ on $G$.
Therefore, by \eqref{g9},  $G\subset\mathcal O_Y$. Since $R>|k_*|$ is arbitrary,
$\mathcal O_Y=\C^d$.
\end{proof}
\section{Proof of Theorem~\ref{thm:main}}\label{sec:proof}

Fix $b\in\R^d$ such that for all $r\in\Z^d\backslash\{0\}$,
\[
 b\cdot r\ne0.
\]
Set $\rho=M+1$ (recalling that $M=\|V\|_\infty$) and
\[
 \mathcal U=\{k\in\C^d:|d_r(k)-d_0(k)|>2\rho
                         \text{ for all }r\in\Z^d\backslash\{0\}\}.
\]

\begin{lemma}\label{lem:simple}
The set $\mathcal U$ is nonempty and open. For every $k\in\mathcal U$,
the disk $\{\lambda: |\lambda-k^2|<\rho\}$ contains exactly one eigenvalue,
counted algebraically. Denote this simple eigenvalue by $\Lambda(k)$.
Then $\Lambda(k)$ is holomorphic on $\mathcal U$.
If $Y$ is as in Theorem~\ref{thm:projection},
$k:[0,1]\to\mathcal U$ is continuous, and
$(k(0),\Lambda(k(0)))\in Y$, then for all $0\le t\le1$, 
\begin{equation}\label{eq:simple-persistence}
 (k(t),\Lambda(k(t)))\in Y.
\end{equation}
For every $R\ge0$, there exists $s_1=s_1(V, R,b)\ge1$ such that for every $s\geq s_1$,
\begin{equation}\label{eq:uniform-separation}
 \{a+isb:a\in\R^d,\ |a|\le R\}\subset\mathcal U.
\end{equation}
\end{lemma}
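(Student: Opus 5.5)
We prove the claims in the order: openness, the eigenvalue count and holomorphy, persistence in $Y$, and finally the uniform separation \eqref{eq:uniform-separation}. The last claim also gives nonemptiness.

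\emph{Openness.} We have $d_r(k)-d_0(k)=r^2+2r\cdot k$. For $k$ in a bounded neighborhood of a point $k_0\in\mathcal U$, $|r^2+2r\cdot k|\ge |r|^2-2|r||k|\to\infty$ as $|r|\to\infty$. Hence only finitely many $r$ can come close to violating the strict inequality. Each of these finitely many conditions is open, and the finite intersection is open.

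\emph{Eigenvalue count and holomorphy.} Fix $k\in\mathcal U$ and consider $H_0(k)+tV$ for $0\le t\le1$. By \eqref{eq:spectral-enclosure}, the spectrum lies in the union of the disks $\{|z-d_n(k)|\le tM\}$. Since $|d_r(k)-d_0(k)|>2\rho>\rho+M$, the circle $|z-k^2|=\rho$ misses all these disks. It misses the disk around $d_0$ because $tM<\rho$, and it misses the others because $|z-d_r|\ge|d_r-d_0|-\rho>\rho>M$. The Riesz projection \eqref{eq:riesz-projection} over this circle therefore depends continuously on $t$, so its rank is constant in $t$. At $t=0$ the rank is $1$, since only $d_0(k)$ lies inside. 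Hence at $t=1$ there is exactly one eigenvalue in the disk, counted algebraically.

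For $k'$ near $k$, the same circle still avoids the spectrum of $H_V(k')$, so the projection $\Pi(k')$ is holomorphic and of rank one. Then $\Lambda(k')=\operatorname{tr}(H_V(k')\Pi(k'))$ is holomorphic; equivalently, it is given by the $1\times1$ matrix $A_\alpha$ from \eqref{eq:riesz-matrix}. One checks that the disk centered at $k'^2$ and the fixed circle select the same eigenvalue. This holds because no spectrum lies in the annular region between them, by the same enclosure estimate.

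\emph{Persistence.} Let $J=\{t\in[0,1]:(k(t),\Lambda(k(t)))\in Y\}$. Since $Y$ is closed and $\Lambda$ is continuous, $J$ is closed, and $0\in J$ by hypothesis. To show $J$ is open, fix $t_0\in J$ and use Proposition~\ref{prop:local-polynomial} at $(k(t_0),\Lambda(k(t_0)))$. Simplicity gives $\ell_\alpha=1$, so $P_\alpha=\lambda-A_\alpha(k)$ is irreducible. Its zero set near this point is exactly the graph of $\Lambda$, which is a connected manifold. The local factor $Q_\alpha$ divides $P_\alpha$ and is nonconstant by \eqref{eq:selected-root}, so $Q_\alpha=P_\alpha$. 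By \eqref{eq:selected}, the whole local graph of $\Lambda$ lies in $Y$, which gives a neighborhood of $t_0$ in $J$. Connectedness of $[0,1]$ then yields $J=[0,1]$.

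\emph{Uniform separation.} This is the main obstacle, and it is where the choice of $b$ is used. For $k=a+isb$,
\[
d_r(k)-d_0(k)=r^2+2r\cdot a+2is\,(r\cdot b).
\]
We split into large and small $|r|$. For $|r|>4R+4\rho+1$, the real part satisfies $r^2+2r\cdot a\ge |r|^2-2R|r|>2\rho$, uniformly in $s$. For the finitely many remaining $r\ne0$, we have $r\cdot b\ne0$ by the choice of $b$. Set $\delta=\min\{|r\cdot b|:0<|r|\le 4R+4\rho+1\}>0$. Then the imaginary part satisfies $2s\,|r\cdot b|\ge 2s\delta>2\rho$ as soon as $s>\rho/\delta$. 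Taking $s_1=\max(1,\rho/\delta+1)$ proves \eqref{eq:uniform-separation}. Applying this with $R=0$ shows in particular that $\mathcal U\ne\varnothing$.
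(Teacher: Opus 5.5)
Your proof is correct and follows the paper's route. The real-part bound for large $|r|$, combined with $2s|r\cdot b|$ for the finitely many remaining $r$, gives openness and \eqref{eq:uniform-separation}; simplicity of $\Lambda(k)$ then makes $Y$ locally coincide with the graph of $\Lambda$. You also supply details the paper only sketches: rank-constancy of the Riesz projection along $H_0(k)+tV$ in place of ``a perturbation argument,'' and the open--closed argument with $Q_\alpha=P_\alpha$ in place of the appeal to a unique irreducible component.
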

\begin{proof}
For $|\Rea k|\le R$, we have
\begin{equation}\label{eq:large-mode-separation}
 \Rea(d_r(k)-d_0(k))
 =|r|^2+2r\cdot\Rea k
 \ge |r|^2-2R|r|\longrightarrow\infty
 \quad\text{as }|r|\to\infty.
\end{equation}
By \eqref{eq:large-mode-separation}, $\mathcal U$ is locally defined
by finitely many  strict inequalities and hence is open.

For all $a\in\R^d$ and $r\in\Z^d\setminus\{0\}$,
\begin{equation}\label{eq:ray-mode-separation}
 |d_r(a+isb)-d_0(a+isb)|
 \ge2s|b\cdot r|\longrightarrow\infty
 \quad\text{as }s\to\infty.
\end{equation}
By \eqref{eq:large-mode-separation} and \eqref{eq:ray-mode-separation},
we obtain \eqref{eq:uniform-separation}, and hence
$\mathcal U\ne\varnothing$.

Fix $k\in\mathcal U$. Then $|d_n(k)-d_0(k)|>2\rho$ for all $n\ne0$.
For $\lambda$ with $|\lambda-k^2|\le\rho$ and $n\ne0$, we obtain
\begin{equation}\label{eq:isolated-contour}
\begin{aligned}
 |\lambda-d_n(k)|
 &\ge |d_n(k)-d_0(k)|-|\lambda-d_0(k)|\\
 &>2\rho-\rho=\rho>M.
\end{aligned}
\end{equation}
By a perturbation argument, we have that  $\Lambda (k)$ is the only eigenvalue in the disk $\{\lambda: |\lambda-k^2|<\rho\}$, and hence $\Lambda (k)\in\mathcal O(\mathcal U)$.

Near $(k,\Lambda(k))$, $B_V$ is the graph of $\Lambda(k)$
and belongs to a unique irreducible component. 
This proves
\eqref{eq:simple-persistence}.
\end{proof}

\begin{lemma}\label{lem:fourier}
There exists $s_0(V,b)\ge1$ such that for every $s\geq s_0$, 
\begin{equation}\label{eq:enclosure}
 \spec H_V(isb)\subset
 \bigcup_{n\in\Z^d}\{\lambda\in\C:|\lambda-(n+isb)^2|\le\tfrac14\}.
\end{equation}
\end{lemma}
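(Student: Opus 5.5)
The plan is to improve the crude enclosure \eqref{eq:spectral-enclosure}, whose radius is $M$, down to radius $\tfrac14$. Two ingredients make this possible: the zero-mean normalization \eqref{eq:mean}, and the fact that on the ray $k=isb$ the diagonal entries coupled by finitely many Fourier modes separate linearly in $s$. I would split $V=P+E$, where $P$ is the Fejér mean of $V$ of some order $L$. Then $P$ is a trigonometric polynomial with $\widehat P(r)=0$ for $|r|>L$, and $\widehat P(0)=\widehat V(0)=0$. Since Fejér means are positive averaging operators, $\|P\|_\infty\le M$ \emph{independently of $L$}. By continuity of $V$ we also have $\|E\|_\infty\to0$ as $L\to\infty$. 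The constant $C$ below depends only on $M$, so I can fix $L$ first with $\|E\|_\infty C<1$, and only then take $s$ large depending on $L$.

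Fix $z$ with $\inf_n|z-(n+isb)^2|>\tfrac14$, and write $d_n=d_n(isb)$ and $R_0=R_0(isb,z)$, so $\|R_0\|<4$. Then $H_0+P-z=(H_0-z)(I+R_0P)$. The key step is to show that $\|(R_0P)^2\|\to0$ as $s\to\infty$, uniformly in such $z$. The $(n,l)$ entry of $(R_0P)^2$ is
\[
\sum_{m}(d_n-z)^{-1}\widehat P(n-m)(d_m-z)^{-1}\widehat P(m-l),
\]
and only pairs with $0<|n-m|\le L$ contribute, because $\widehat P(0)=0$. For such pairs,
\[
|d_n-d_m|\ge|\Ima(d_n-d_m)|=2s|b\cdot(n-m)|\ge2sc_L,
\qquad c_L=\min_{0<|r|\le L}|b\cdot r|>0,
\]
using the choice of $b$. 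Hence at most one of $|d_n-z|$ and $|d_m-z|$ is smaller than $sc_L$. The other is at least $sc_L$, and both exceed $\tfrac14$, so $|(d_n-z)^{-1}(d_m-z)^{-1}|\le 4/(sc_L)$. Each row and each column of $(R_0P)^2$ then has at most $(2L+1)^{2d}$ nonzero terms, each bounded by $4\|\widehat P\|_\infty^2/(sc_L)$ with $\|\widehat P\|_\infty\le(2\pi)^dM$. The Schur test therefore gives $\|(R_0P)^2\|\le C_{L}/s$. This is the step I expect to be the main obstacle, since it is where all the $L$-dependence has to be absorbed into the choice of $s$.

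For $s\ge s_0(L)$ with $C_L/s\le\tfrac12$, I write $(I+R_0P)^{-1}=(I-R_0P)(I-(R_0P)^2)^{-1}$. Using \eqref{eq:multiplication-operator}, $\|R_0P\|\le4M$, so $\|(I+R_0P)^{-1}\|\le2(1+4M)$. It follows that
\[
\|(H_0+P-z)^{-1}\|\le C:=8(1+4M),
\]
which is independent of $L$, as needed for the order of choices above. Finally, $H_V(isb)-z=(H_0+P-z)\bigl(I+(H_0+P-z)^{-1}E\bigr)$ with $\|(H_0+P-z)^{-1}E\|\le C\|E\|_\infty<1$. The Neumann series then shows $z\notin\spec H_V(isb)$, which gives \eqref{eq:enclosure}. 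The threshold $s_0$ depends only on $V$ (through $M$ and the choice of $L$) and on $b$ (through $c_L$).
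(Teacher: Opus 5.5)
Your proof is correct and uses the same mechanism as the paper. In both, $V=P+E$ with $P$ a mean-zero trigonometric polynomial and $\|E\|_\infty$ small, and $|\Ima(d_n(isb)-d_m(isb))|=2s|b\cdot(n-m)|$ makes the sandwich $R_0PR_0$ of size $O(1/s)$ uniformly in $z$. The paper works through the eigenvector identity $u=D_sPD_sVu-D_sEu$, which needs only $\|E\|_\infty<1/16$ independent of $M$; you instead invert $I+R_0P$ via $(I-(R_0P)^2)^{-1}$ and absorb $E$ by a Neumann series with an $M$-dependent threshold, a cosmetic difference.
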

\begin{proof}
By \eqref{eq:mean} and the Stone--Weierstrass theorem
\cite[Theorem 7.33]{Rudin}, there exists a finite
set $F\subset\Z^d\setminus\{0\}$ and a
trigonometric polynomial
\[
 P(x)=\sum_{r\in F}p_re^{ir\cdot x}
\]
such that the remainder $E=V-P$ satisfies
\begin{equation}\label{eq:approximation}
 \|E\|_\infty<\tfrac1{16}.
\end{equation}

In Fourier coordinates,
\begin{equation}\label{eq:polynomial-operator}
 (Pu)(m)=\sum_{r\in F}p_r u(m-r).
\end{equation}
Suppose that $\lambda\in\spec H_V(isb)$ and
$|\lambda-d_n(isb)|>1/4$ for every $n\in\Z^d$. Set
\[
 D_s=\operatorname{diag}_n(d_n(isb)-\lambda)^{-1}.
\]
Then
\begin{equation}\label{eq:diagonal-resolvent-bound}
 \|D_s\|\le4.
\end{equation}
For $r\in F$ and $m\in\Z^d$,
\begin{equation}\label{eq:shift-separation}
 |d_{m+r}(isb)-d_m(isb)|\ge2s|b\cdot r|.
\end{equation}
By \eqref{eq:polynomial-operator}, \eqref{eq:diagonal-resolvent-bound},
and \eqref{eq:shift-separation}, we obtain
\begin{equation}\label{eq:double-resolvent}
\begin{aligned}
 \|D_sPD_s\|
 &\le\sum_{r\in F}|p_r|\sup_m
 \frac1{|d_{m+r}(isb)-\lambda|\,|d_m(isb)-\lambda|}\\
 &\le\frac4s\sum_{r\in F}\frac{|p_r|}{|b\cdot r|}.
\end{aligned}
\end{equation}
By Proposition~\ref{prop:local-polynomial}, choose $u\in H^2$ with
$H_V(isb)u=\lambda u$ and $\|u\|_2=1$. Since
\[
\begin{aligned}
 u&=-D_sVu=-D_sPu-D_sEu\\
  &=D_sPD_sVu-D_sEu,
\end{aligned}
\]
by \eqref{eq:multiplication-operator}, \eqref{eq:approximation},
\eqref{eq:diagonal-resolvent-bound}, and \eqref{eq:double-resolvent},
we obtain
\[
 1\le\frac{4M}{s}\sum_{r\in F}\frac{|p_r|}{|b\cdot r|}
             +4\|E\|_\infty<\frac12
\]
for all sufficiently large $s$. This contradiction proves
\eqref{eq:enclosure}.
\end{proof}

\begin{remark}\label{rem:shifted-strips}
Since $(n+isb)^2+s^2|b|^2=|n|^2+2is\,b\cdot n$, by \eqref{eq:enclosure}, we obtain that the eigenvalues of $H_V(isb)$, shifted by $s^2|b|^2$, satisfy 
\[
 \spec H_V(isb)+s^2|b|^2
 \subset\bigcup_{N=0}^{\infty}
 \{z\in\C:|\Rea z-N|\le\tfrac14\}
\]
for all $s\ge s_0$. Thus the lines $\Rea z=N+\tfrac12$,
$N\ge0$, contain no (shifted) eigenvalues; see
Figure~\ref{fig:shifted-strips}.
If $\lambda_s\in\spec H_V(isb)$ depends continuously on $s$
in an interval $I\subset[s_0,\infty)$, then
$\lambda_s+s^2|b|^2$ remains in one fixed strip.
\end{remark}

\begin{figure}[ht]
\centering
\begin{tikzpicture}[x=3.4cm,y=1.1cm,>=stealth,font=\small]
  \foreach \a in {0,1,2} {
    \fill[black!4] (\a-0.25,-0.45) rectangle (\a+0.25,2.4);
    \fill[pattern=north east lines,pattern color=black!22]
      (\a-0.25,-0.45) rectangle (\a+0.25,2.4);
    \draw[black!65] (\a-0.25,-0.45) -- (\a-0.25,2.4);
    \draw[black!65] (\a+0.25,-0.45) -- (\a+0.25,2.4);
    \draw[black!50,densely dotted] (\a,-0.45) -- (\a,2.4);
  }
  \foreach \a in {0.5,1.5} {
    \draw[dashed,thick] (\a,-0.45) -- (\a,2.4);
  }
  \draw[->] (-0.65,0) -- (2.75,0) node[right] {$\Rea z$};
  \draw[->] (-0.55,0.6) -- (-0.55,2.4) node[above] {$\Ima z$};
  \foreach \a/\lab in {0/N,1/{N+1},2/{N+2}} {
    \draw (\a,0.055) -- (\a,-0.055);
    \node[below,fill=white,inner sep=2pt] at (\a,-0.48) {$\lab$};
  }
  \node[below] at (0.5,-0.48) {$N+\tfrac12$};
  \node[below] at (1.5,-0.48) {$N+\tfrac32$};
  \draw[<->] (-0.25,2.65) -- (0.25,2.65)
    node[midway,above] {$\tfrac12$};
  \node at (2.6,1.2) {$\cdots$};
  \node[anchor=west] at (0.7,2.72)
    {$z=\lambda+s^2|b|^2,\quad \lambda\in\spec H_V(isb)$};
\end{tikzpicture}
\caption{The (shifted) eigenvalues lie in the shaded strips.
The dashed lines contain no (shifted) eigenvalues.}
\label{fig:shifted-strips}
\end{figure}

\begin{lemma}\label{lem:bounded-indices}
Let $s_0(V,b)$ be such that \eqref{eq:enclosure} holds for all $s\ge s_0$,
and let $Y$ be as in Theorem~\ref{thm:projection}.
There exists an integer $N=N(V,Y,b,s_0)\ge0$ such that, for every
$s\ge s_0$, there exist $\lambda_s\in E_Y(isb)$, continuously depending on $s$,  and $n_s\in\Z^d$
satisfying
\begin{equation}\label{eq:capture-index}
 |n_s|^2=N
\end{equation}
and
\begin{equation}\label{eq:capture}
 |\lambda_s-(n_s+isb)^2|\le\tfrac14.
\end{equation}
\end{lemma}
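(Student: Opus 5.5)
We plan to construct a continuous selection of points of $Y$ over the ray $k=isb$, $s\ge s_0$, that stays in a single shifted strip.

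\emph{Step 1: a starting point.} By Theorem~\ref{thm:projection}, $E_Y(is_0b)\neq\varnothing$. Pick $\lambda_{s_0}\in E_Y(is_0b)$. By Lemma~\ref{lem:fourier}, $\lambda_{s_0}+s_0^2|b|^2$ lies in the strip $\{|\Rea z-N|\le\tfrac14\}$ for a unique integer $N\ge0$. This $N$ will be the integer of the lemma.

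\emph{Step 2: continuation along the ray.} We use the local structure of $Y$. Near any $(k_*,\lambda_*)\in Y$, \eqref{eq:selected}--\eqref{eq:selected-root} show that $Y$ is the zero set of a monic Weierstrass-type polynomial $Q_\alpha(k,\lambda)$ in $\lambda$ of degree $q_\alpha\ge1$. Restrict to $k=isb$ with $s$ real. The roots of $Q_\alpha(isb,\cdot)$ are then continuous in $s$ as an unordered $q_\alpha$-tuple, hence admit locally a continuous selection. (For a single real parameter one can even use Rellich/Puiseux-type arguments; mere continuity suffices, by ordering roots via a continuous choice.) Define
\[
S=\{s\ge s_0:\text{there is a continuous }\lambda:[s_0,s]\to\C \text{ with }\lambda_t\in E_Y(itb),\ \lambda_{s_0}\text{ as chosen}\}.
\]
Local selection at the endpoint shows that $S$ is open in $[s_0,\infty)$. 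For closedness, take $s_j\uparrow s_*$. Along the path, $\lambda_t+t^2|b|^2$ stays in the $N$-th strip by Remark~\ref{rem:shifted-strips}, since the dashed lines contain no eigenvalues and the path is continuous. Hence by \eqref{eq:enclosure}, $\lambda_t$ is within $\tfrac14$ of $(n+itb)^2$ with $|n|^2=N$, a finite set of centers. So $\lambda_t$ remains bounded near $t=s_*$. Since $Y$ is closed, limit points of $\lambda_t$ as $t\to s_*$ lie in $E_Y(is_*b)$, which is discrete.

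The main obstacle is showing that the limit exists, so that the path extends continuously to $s_*$. We handle it with the local polynomial at a limit point $(is_*b,\mu)$. All roots of $Q_\alpha(isb,\cdot)$ inside $\Delta_\alpha$ converge to $\mu$ as $s\to s_*$, because $Q_\alpha(is_*b,\lambda)=(\lambda-\mu)^{q_\alpha}$ and roots depend continuously. Moreover, once the path enters the small disk $\Delta_\alpha$, it cannot leave it for $t$ close to $s_*$: the boundary $\partial\Delta_\alpha$ is eventually free of points of $E_Y$, by continuity of roots and discreteness at $s_*$. Hence $\lambda_t\to\mu$. We then concatenate with a local continuous root selection starting at $\mu$ to go past $s_*$. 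Thus $S=[s_0,\infty)$. To obtain a single global function rather than a family of paths, we perform the continuation on $[s_0,T]$ for each $T$, extending the previous one each time (the openness step extends a given path). Equivalently, we take a maximal continuous extension, and the argument above shows it is defined on all of $[s_0,\infty)$.

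\emph{Step 3: conclusion.} For each $s\ge s_0$, $\lambda_s+s^2|b|^2$ lies in the fixed strip $N$. By \eqref{eq:enclosure} and the identity $(n+isb)^2+s^2|b|^2=|n|^2+2isb\cdot n$, the $n$ with $|\lambda_s-(n+isb)^2|\le\tfrac14$ must satisfy $\bigl||n|^2-N\bigr|\le\tfrac12$, hence $|n|^2=N$. Taking $n_s$ to be such an $n$ gives \eqref{eq:capture-index} and \eqref{eq:capture}. The choice of $n_s$ need not be continuous.
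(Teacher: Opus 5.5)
Your proposal is correct and follows the same route as the paper. Both arguments start from a point of $E_Y(is_0b)$ supplied by Theorem~\ref{thm:projection}, continue along the ray by local root selection for $Q_\alpha$, use the eigenvalue-free lines $\Rea z=N+\tfrac12$ to keep $|n_s|^2=N$, and use boundedness together with closedness of $Y$ to pass through limit points. You also fill in what the paper's ``passing to the limit'' step leaves implicit: you show that $\lambda_t$ actually converges at $s_*$, using the local polynomial at a limit point and the spectrum-free circle $\partial\Delta_\alpha$, and your maximal-extension device ensures a single continuous path rather than a family of paths.
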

\begin{proof}
By Theorem~\ref{thm:projection} and \eqref{eq:enclosure}, choose
$\lambda_{s_0}\in E_Y(is_0b)$ and a corresponding $n_{s_0}\in\Z^d$ such that \eqref{eq:capture} holds for $s=s_0$.
Set $N=|n_{s_0}|^2$.
By \eqref{eq:selected}, $Q_\alpha(isb,\cdot)$ has a continuous local
root $\lambda_s\in E_Y(isb)$ through $\lambda_{s_0}$, for
$s\in I=[s_0,s_0+\delta]$ with $\delta>0$.
For every $s\in I$, by \eqref{eq:enclosure}, choose
$n_s\in\Z^d$ such that
\begin{equation}\label{eq:capture1}
 |\lambda_s-(n_s+isb)^2|\le\tfrac14.
\end{equation}

By \eqref{eq:capture1}, we obtain
\begin{equation}\label{eq:capture-real}
 \bigl|\Rea(\lambda_s+s^2|b|^2)-|n_s|^2\bigr|\le\tfrac14.
\end{equation}
Since $|n_s|^2$ are nonnegative integers, by continuity of $\lambda_s$ and
\eqref{eq:capture-real} (see Remark \ref{rem:shifted-strips}), we obtain that \eqref{eq:capture-index} holds for all $s\in I$.

By \eqref{eq:capture1} and \eqref{eq:capture-index},
\begin{equation}\label{eq:capture-bounded}
 |\lambda_s|\le N+s^2|b|^2+\tfrac14.
\end{equation}
This implies that if \eqref{eq:capture-index} and \eqref{eq:capture} hold for all $s\in [s_0,\kappa)$,  then by passing to the limit and using the closedness of $Y$, \eqref{eq:capture-index} and \eqref{eq:capture} hold for $s=\kappa.$ By continuation, we have that  \eqref{eq:capture-index} and \eqref{eq:capture} hold for  all $s\geq s_0$.

\end{proof}
The following lemma shows that when $s$ is sufficiently large, $n_s$ in  \eqref{eq:capture-index} and \eqref{eq:capture}  is independent of $s$.
\begin{lemma}\label{lem:unique-index}
Let $N$, $\lambda_s$, and $n_s$ be as in Lemma~\ref{lem:bounded-indices}.
There exist  $s_1=s_1(V,b,N,s_0)\ge s_0$ and $n_* \in \Z^d$ such that for every
$s\ge s_1$,
\begin{equation}\label{g10}
 |\lambda_s-(n_{*}+isb)^2|\le\tfrac14.
\end{equation}
Moreover, 
\begin{equation}\label{eq:holomorphic-tail}
 \lambda_s=\Lambda(n_*+isb), s\geq s_1.
\end{equation}
In particular, $s\mapsto\lambda_s$ extends holomorphically to a
complex neighborhood of $[s_1,\infty)$.
\end{lemma}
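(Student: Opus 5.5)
Since $|n_s|^2=N$ for all $s\ge s_0$, the index $n_s$ ranges over the finite set $S_N=\{n\in\Z^d:|n|^2=N\}$. The plan is to show that for large $s$ the disks $\{\lambda:|\lambda-(n+isb)^2|\le\tfrac14\}$, $n\in S_N$, are pairwise disjoint, and in fact far apart. Continuity of $\lambda_s$ then forces $n_s$ to be constant. Once $n_s=n_*$ is fixed, shifting quasimomentum identifies $\lambda_s$ with the simple branch $\Lambda$ at $n_*+isb$.

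\emph{Step 1 (separation).} For distinct $n,n'\in S_N$,
\[
(n+isb)^2-(n'+isb)^2=|n|^2-|n'|^2+2is\,b\cdot(n-n')=2is\,b\cdot(n-n').
\]
Since $n-n'\ne0$, the choice of $b$ gives $b\cdot(n-n')\ne0$. Put $\beta_N=\min\{|b\cdot r|: r=n-n',\ n\ne n'\in S_N\}>0$, which is a minimum over a finite set. Then the distance between these centers is at least $2s\beta_N$. Choose $s_1'\ge s_0$ with $2s_1'\beta_N>1$, so the closed disks of radius $\tfrac14$ around the centers indexed by $S_N$ are disjoint and at positive distance from each other for $s\ge s_1'$.

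\emph{Step 2 (constancy).} On $[s_1',\infty)$, each $s$ has exactly one $n\in S_N$ with $\lambda_s$ in the $n$-th disk. Define the continuous function $\phi_n(s)=|\lambda_s-(n+isb)^2|$. The set $\{s:\phi_n(s)\le\tfrac14\}$ is closed. By the separation in Step 1, it is also open in $[s_1',\infty)$: near such an $s$, $\lambda_s$ stays away from every other disk, yet by \eqref{eq:capture} it must lie in some disk. Connectedness then yields one $n_*$, which proves \eqref{g10}.

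\emph{Step 3 (holomorphy).} By \eqref{g10},
\[
|\lambda_s-(n_*+isb)^2|\le\tfrac14<\rho .
\]
Take $R=|n_*|$ ($=\sqrt N$) in \eqref{eq:uniform-separation}, so $n_*+isb\in\mathcal U$ for $s\ge s_1(V,\sqrt N,b)$. Let $s_1$ be the maximum of this value and $s_1'$. Since $H_V(k+m)$ is unitarily equivalent to $H_V(k)$ via multiplication by $e^{-im\cdot x}$, we have $\spec H_V(isb)=\spec H_V(n_*+isb)$. By Lemma~\ref{lem:simple}, the disk $|\lambda-(n_*+isb)^2|<\rho$ contains exactly one eigenvalue of $H_V(n_*+isb)$, namely $\Lambda(n_*+isb)$. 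Hence $\lambda_s=\Lambda(n_*+isb)$. The map $s\mapsto\Lambda(n_*+isb)$ is holomorphic in complex $s$ near $[s_1,\infty)$ because $\mathcal U$ is open and $\Lambda\in\mathcal O(\mathcal U)$.

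\emph{Main obstacle.} The hardest point is Step 2, making the connectedness argument rigorous. Two things need care. First, the choice of $n_s$ in Lemma~\ref{lem:bounded-indices} was arbitrary, and it becomes unique only once the disks are disjoint. Second, one must check that $|n_s|^2=N$ really confines $n_s$ to $S_N$. Both are handled by the finiteness of $S_N$ and the uniform gap $2s\beta_N$. The point to track is that $s_1$ depends on $N$ through $\beta_N$ and through $R=\sqrt N$.
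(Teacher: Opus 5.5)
Your proof is correct and follows essentially the paper's argument. Like the paper, you separate the finitely many radius-$\tfrac14$ disks indexed by $\{n:|n|^2=N\}$ for large $s$, use continuity of $\lambda_s$ to freeze the index at $n_*$, and then combine integer-translation invariance with the uniqueness statement in Lemma~\ref{lem:simple} to get $\lambda_s=\Lambda(n_*+isb)$. The only difference is cosmetic: you obtain the separation directly from $(n+isb)^2-(n'+isb)^2=2is\,b\cdot(n-n')$ with a constant $\beta_N$, whereas the paper reads it off from $n+isb\in\mathcal U$ via $d_m(isb)-d_n(isb)=d_{m-n}(n+isb)-d_0(n+isb)$, which yields the separation and the membership $n_*+isb\in\mathcal U$ from a single threshold $s_1$.
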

\begin{proof}
By \eqref{eq:uniform-separation} with $R=2\sqrt N$, choose $s_1\ge s_0$
such that $n+isb\in\mathcal U$ for all $s\ge s_1$ and
$n\in\Z^d$ with $|n|^2=N$.
For these $s,n$ and every $m\in\Z^d\setminus\{n\}$ with $|m|=\sqrt N$,  we obtain
\begin{equation}\label{eq:tail-separation}
 |d_m(isb)-d_n(isb)|
 =|d_{m-n}(n+isb)-d_0(n+isb)|>2\rho.
\end{equation}
By \eqref{eq:capture},  \eqref{eq:tail-separation} and
continuity of $\lambda_s$,  we conclude that $n_s$ is  constant on $[s_1,\infty)$. Denote  this constant by $n_*$. This implies \eqref{g10}.
By \eqref{eq:invariance}, \eqref{eq:capture}, and
Lemma~\ref{lem:simple}, we obtain \eqref{eq:holomorphic-tail}.

\end{proof}

\begin{proof}[\bf Proof of Theorem~\ref{thm:main}]
By Lemmas~\ref{lem:simple} and \ref{lem:fourier}, choose $s_0 (V,b)\ge1$
such that \eqref{eq:enclosure} holds for all $s\ge s_0$ and
\begin{equation}\label{eq:ray-isolated}
 isb\in\mathcal U\qquad(s\ge s_0).
\end{equation}
Fix
\[
 p_0=(is_0b,\Lambda(is_0b)),
\]
and by Lemma~\ref{lem:simple} again, there exists a  unique irreducible component    $\Gamma_0$  containing $p_0$.

For an arbitrary irreducible component $\Gamma$, recall that
$Y=\bigcup_{m\in\Z^d}T_m(\Gamma)$.
Note that the point $p_0$ is fixed independently of $\Gamma$. Therefore, in order to prove Theorem \ref{thm:main}, 
 it suffices to prove $p_0\in Y$:
then $p_0\in T_m(\Gamma)$ for some $m\in\Z^d$, and uniqueness gives
$T_m(\Gamma)=\Gamma_0$.

By Lemma \ref{lem:unique-index} and \eqref{eq:invariance}, one has that for some $n_*\in\Z^d$,
\[
 (isb ,\lambda_s=\Lambda(n_*+isb) )\in Y
\]
and
\[
 (n_*+isb,\Lambda(n_*+isb))\in Y.
\]
For $0\le t\le1$, define
\[
 k_1(t)=(1-t)n_*+isb
\]
and
\[
 k_2(t)=i\bigl((1-t)s+ts_0\bigr)b.
\]
From the proof of Lemma \ref{lem:unique-index},  when $s\geq s_1$,
both paths
lie in $\mathcal U$ . Applying \eqref{eq:simple-persistence} along
$k_1$ and then $k_2$, we obtain
\[
 (isb,\Lambda(isb))\in Y
\]
and then
\[
 p_0=(is_0b,\Lambda(is_0b))\in Y.
\]
This completes the proof.
\end{proof}
\section*{Acknowledgements}
W. Liu would like to thank the Simons Institute for the Theory of Computing
for its hospitality during his visit, when this work was finished.
This work was supported in part by NSF grant DMS-2246031.
He also thanks Peter Kuchment for sharing a 20-page AI-generated
draft obtained by Antoine Levitt, which proposes a proof of
irreducibility of the Bloch variety in dimension three.
The proposed approach  is different from ours and is based on the
directional compactification of Kn\"orrer and Trubowitz \cite{KT}.
\section*{AI usage}
The main ideas of this work came from discussions between the
author and GPT-6. After the model generated a proposed proof,
the author substantially simplified the arguments and rewrote
the manuscript (again with GPT assistance).

\bibliographystyle{amsplain}
\bibliography{bib}
\end{document}